\newtheorem{theorem}{Theorem}[section]
\newtheorem{lemma}[theorem]{Lemma}
\newtheorem{corollary}[theorem]{Corollary}
\newcommand{\n}[1]{||#1||}
\theoremstyle{definition}
\newtheorem{definition}[theorem]{Definition}
\newtheorem{example}[theorem]{Example}
\newtheorem*{solution*}{Solution}
\theoremstyle{remark}
\newtheorem{remark}[theorem]{Remark}
\newtheorem{claim}[theorem]{Claim}
\numberwithin{equation}{section}
\newcommand{\C}{\mathbb{C}}
\newcommand{\Z}{\mathbb{Z}}
\newcommand{\U}{\mathscr{U}}
\newcommand{\M}{\mathscr{M}}
\begin{document}

\allowdisplaybreaks

\title[$G$-Invariant Spaces of Bounded Measurable Functions]{Spaces of Bounded Measurable Functions Invariant Under a Group Action}

\author{Samuel A. Hokamp}
\address{Department of Mathematics, Sterling College, Sterling, Kansas, 67579}
\email{samuel.hokamp@sterling.edu}

\subjclass[2010]{Primary 46E30. Secondary 32A70}

\date{\today}

\keywords{Spaces of continuous functions, group actions, functional analysis.\\\indent\emph{Corresponding author.} Samuel A. Hokamp \Letter~\href{mailto:samuel.hokamp@sterling.edu}{samuel.hokamp@sterling.edu}. \phone~920-634-7356.}

\begin{abstract}
In this paper we characterize spaces of $L^\infty$-functions on a compact Hausdorff space that are invariant under a transitive and continuous group action. This work generalizes the author's 2021 results, found in \cite{hokamp2021certain}, concerning the specific case of unitarily and M\"obius invariant spaces of $L^\infty$-functions defined on the unit sphere in $\C^n$.
\end{abstract}

\maketitle

\section{Introduction}

To understand this paper's place in the literature, we must first understand the relationships between the papers \cite{NR} (Nagel and Rudin, 1976), \cite{hokamp2021certain} (Hokamp, 2021), and \cite{hokampconti}~(Hokamp). Brief descriptions are given below.

In \cite{NR}, Nagel and Rudin determine the closed unitarily invariant spaces of continuous and $L^p$-functions on the unit sphere of $\mathbb{C}^n$, for $1\leq p<\infty$. That is, there exists a collection ${C}$ of (minimal and invariant) spaces of continuous functions such that each closed unitarily invariant space is the closed direct sum of some subcollection of ${C}$. The same result is not shown for $L^\infty$-functions, since for each $L^\infty$-function $f$, the map $u\mapsto f\circ u$ from the unitary group into the $L^\infty$ -functions need not be continuous under the norm topology.

In \cite{hokamp2021certain}, the author formulates a result for $L^\infty$-functions on the unit sphere that is analogous to the results of Nagel and Rudin when the $L^\infty$-functions are endowed with the weak*-topology. The conclusion of this paper is that the same collection of (minimal and invariant) spaces of continuous functions described in \cite{NR} serves as the ``building blocks'' of the weak*-closed unitarily invariant spaces of $L^\infty$-functions via closures of direct sums of subcollections.

In \cite{hokampconti}, the author generalizes the results of Nagel and Rudin in \cite{NR} by considering spaces of complex continuous and $L^p$-functions, for $1\leq p<\infty$, defined on an arbitrary compact Hausdorff space $X$, on which a compact group $G$ acts continuously and transitively. In particular, when a collection $\mathscr{G}$ of (minimal and invariant) spaces of continuous functions with certain properties exists, this collection plays the same role in constructing the closed spaces of continuous and $L^p$-functions on $X$ as the collection from \cite{NR}.

This paper generalizes the results in \cite{hokamp2021certain} and acts as an analogue to \cite{hokampconti}, in that we explore the case of $L^\infty$-functions defined on $X$ endowed with the weak*-topology. The main result (Theorem~\ref{weak*-closed G inv}) is that when the same collection $\mathscr{G}$ of continuous functions on $X$ from \cite{hokampconti} exists (Definition~\ref{G-col def1}), all weak*-closed invariant spaces of $L^\infty$-functions can be constructed by closing the direct sum of some subcollection of $\mathscr{G}$. 

\section{Preliminaries} 


Let $X$ be a compact Hausdorff space and $C(X)$ the space of continuous complex functions with domain $X$. Let $G$ be a compact group (with Haar measure $m$) that acts continuously and transitively on $X$. When we wish to be explicit, the map $\varphi_\alpha:X\to X$ shall denote the action of $\alpha$ on $X$ for each $\alpha\in G$; otherwise, $\alpha x$ denotes the action of $\alpha\in G$ on $x\in X$.

Let $\mu$ denote the unique regular Borel probability measure on $X$ that is invariant under the action of $G$. Specifically,
\begin{equation}\label{measinv}
    \int_X f\,d\mu=\int_X f\circ\varphi_\alpha\,d\mu,
\end{equation}
for all $f\in C(X)$ and $\alpha\in G$. The existence of such a measure is a result of Andr\'e Weil from \cite{weil1940}, and a construction of $\mu$ can be found in \cite{joys} (Theorem~6.2). Throughout the paper, $\mu$ shall refer to this measure.

The notation $L^p(\mu)$ denotes the usual Lebesgue spaces, for $1\leq p\leq\infty$. 
For $Y\subset C(X)$, the uniform closure of $Y$ is denoted $\overline{Y}$, and for $Y\subset L^p(\mu)$, the norm-closure of $Y$ in $L^p(\mu)$ is denoted $\overline{Y}^{p}$. When $L^\infty(\mu)$ is equipped with the weak*-topology, $\overline{Y}^*$ denotes the weak*-closure of $Y\subset L^\infty(\mu)$.

\begin{remark}\label{weak* and weak Lp closure containment}
For $Y\subset L^\infty(\mu)$ convex and $1\leq p<\infty$, we have $$\overline{Y}^{*}\subset \overline{Y}^{p}\cap L^\infty(\mu).$$
\end{remark}

This follows from the local convexity of $L^p(\mu)$, and from the fact that the weak*-topology on $L^\infty(\mu)$ is \textit{stronger} than the topology which $L^\infty(\mu)$ inherits from each $L^p(\mu)$ endowed with the weak topology.

The following is an easy consequence of \eqref{measinv}:

\begin{remark}\label{switcher-mu}
Let $1\leq p<\infty$ and let $p^\prime$ be its conjugate exponent. Then $$\int_X (f\circ\varphi_\alpha)\cdot g\,d\mu=\int_X f\cdot (g\circ\varphi_{\alpha^{-1}})\,d\mu,$$ for $f\in L^p(\mu)$, $g\in L^{p^\prime}(\mu)$, and $\alpha\in G$.
\end{remark}

The following definition has appeared in several sources, such as \cite{hokampconti}, \cite{Izzo2010}, or \cite{RFT}, but no attribution is given. The last citation is the specific case of the unitary group acting on the unit sphere in $\mathbb{C}^n$.

\begin{definition}
A space of complex functions $Y$ defined on $X$ is \textbf{invariant under $G$} (\textbf{$G$-invariant}) if $f\circ \varphi_\alpha\in Y$ for every $f\in Y$ and every $\alpha\in G$.
\end{definition}

\begin{remark}
Since the action is continuous, $C(X)$ is $G$-invariant. Conversely, if $C(X)$ is $G$-invariant, then each action $\varphi_\alpha$ must be continuous.
\end{remark}

\begin{remark}\label{reminv}
The invariance property \eqref{measinv} means $\mu(\alpha E)=\mu(E)$ for every Borel set $E\subset X$ and every $\alpha\in G$. Consequently, \eqref{measinv} holds for every $L^p$-function, and $L^p(\mu)$ is $G$-invariant for all $1\leq p\leq\infty$.
\end{remark}


Definition~\ref{hokampconti2.3} and Definition~\ref{hokampconti2.4}, stated in \cite{hokampconti}, are generalizations of definitions found in \cite{RFT} related to the unitary group.

\begin{definition}[2.3 \cite{hokampconti}]\label{hokampconti2.3}
If $Y$ is $G$-invariant and $T$ is a linear transformation on $Y$, we say $T$ \textbf{commutes with} $G$ if $$T(f\circ\varphi_\alpha)=(Tf)\circ\varphi_\alpha$$ for every $f\in Y$ and every $\alpha\in G$.
\end{definition}

\begin{definition}[2.4 \cite{hokampconti}]\label{hokampconti2.4}
A space $Y\subset C(X)$ is \textbf{$G$-minimal} if it is $G$-invariant and contains no nontrivial $G$-invariant spaces.
\end{definition}

The remaining definitions all come from \cite{hokampconti}.

\begin{definition}[4.1 \cite{hokampconti}]\label{H(x)spaces}
For each $x\in X$, the space $H(x)$ is the set of all continuous functions that are unchanged by the action of any element of $G$ which stabilizes $x$. That is, $$H(x)=\{f\in C(X):f=f\circ\varphi_\alpha,\text{ for all }\alpha\in G\text{ such that }\alpha x=x\}.$$
\end{definition}

\begin{definition}[4.2 \cite{hokampconti}]\label{G-col def1}
Let $\mathscr{G}$ be a collection of spaces in $C(X)$ with these properties:
\begin{itemize}
    \item[(1)] Each $H\in\mathscr{G}$ is a closed $G$-minimal space.
    \item[(2)] Each pair $H_1$ and $H_2$ in $\mathscr{G}$ is orthogonal (in $L^2(\mu)$): If $f_1\in H_1$ and $f_2\in H_2$, then $$\int_X f_1\bar{f_2}\,d\mu=0.$$
    \item[(3)] $L^2(\mu)$ is the direct sum of the spaces in $\mathscr{G}$.
\end{itemize}
We say $\mathscr{G}$ is a \textbf{$G$-collection} if it also possesses the following property:
\begin{itemize}
    \item[($*$)] $\dim (H\cap H(x))=1$ for each $x\in X$ and each $H\in\mathscr{G}$.
\end{itemize}
\end{definition}




Throughout the paper, $\mathscr{G}$ shall denote a $G$-collection of $C(X)$, indexed by $I$, whose elements are denoted $H_i$, for $i\in I$, and further, \textit{we assume that a $G$-collection exists for $X$.}

\begin{remark}
It should be stressed that we are not implying that a $G$-collection \textit{always} exists for any $X$ and $G$. However, a collection of spaces in $C(X)$ lacking at most only property $(*)$ of Definition~\ref{G-col def1} always exists, as a consequence of the Peter-Weyl theorem from \cite{peterweyl}. This collection is necessarily unique.
\end{remark}

\begin{definition}[4.7 \cite{hokampconti}]
We define $\pi_i$ to be the projection of $L^2(\mu)$ onto $H_i$.
\end{definition}

\begin{remark}
In Theorem~4.5 of \cite{hokampconti}, it is shown that each $\pi_i$ commutes with $G$, and to each $x\in X$ there exists a unique $K_x\in H_i$ such that $$\pi_i f=\int_X f(x)K_x\,d\mu(x),$$ for all $f\in L^2(\mu)$. The domain of $\pi_i$ can then be extended to $L^1(\mu)$ by defining $\pi_i f$ to be the above integral for all $f\in L^1(\mu)$.
\end{remark}

\begin{definition}[4.8 \cite{hokampconti}]
For $\Omega\subset I$, $E_\Omega$ denotes the direct sum of the spaces $H_i$ for $i\in\Omega$.
\end{definition}

\begin{remark}\label{E Omega G-inv}
The $G$-invariance of each $E_\Omega$ is a natural consequence of the definition.
\end{remark}

\begin{remark}\label{f_i=pi_i f}
Definition~\ref{G-col def1} yields that each $f\in L^2(\mu)$ has a unique expansion $f=\sum f_{i}$, with each $f_{i}\in H_i$, which converges unconditionally to $f$ in the $L^2$-norm. Since $\pi_{i}$ is the identity map on $H_i$ and the spaces $H_i$ are pairwise orthogonal, we have $f_{i}=\pi_{i}f$ for $i\in I$. Thus, $$f=\sum\pi_{i}f.$$
\end{remark}

Each $\pi_{i}$ is continuous as the orthogonal projection of $L^2(\mu)$ onto the closed subspace $H_i$. Thus, $\pi_{i}$ annihilates a subset of $L^2(\mu)$ if and only if it annihilates its closure. The following is a consequence of this and Remark~\ref{f_i=pi_i f}:

\begin{remark}\label{E Omega L2 pi i}
For each set $\Omega\subset I$, we have $$\overline{E}_\Omega^{2}=\{f\in L^2(\mu):\pi_{i}f=0\text{ when }i\notin\Omega\}.$$
\end{remark}


Finally, the classical results used in this paper can be found in many texts, with the reference given in each instance being just one such place.




\section{Closures of \texorpdfstring{$G$}{G}-Invariant Sets}\label{Ginv Closures}

In this section, we show that $G$-invariance is preserved by closures in the spaces $C(X)$ and $L^p(\mu)$ for $1\leq p\leq\infty$ (Corollaries \ref{Lp and C G-inv closure} and \ref{Linf G-inv closure}). In particular, $G$ induces classes of isometries on $L^p(\mu)$ and on $C(X)$ (Theorem~\ref{Lalpha bli}), as well as a class of weak*-homeomorphisms on $L^\infty(\mu)$ (Theorem~\ref{weak* cont}).

\begin{theorem}\label{Lalpha bli}
Suppose $\mathscr{X}$ is any of the spaces $C(X)$ or $L^p(\mu)$ for $1\leq p\leq\infty$ and $\alpha\in G$. If $L_\alpha:\mathscr{X}\to\mathscr{X}$ is the map given by $L_\alpha f=f\circ\varphi_\alpha$, then $L_\alpha$ is a bijective linear isometry.
\end{theorem}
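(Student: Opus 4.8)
The plan is to verify the three properties—linearity, isometry, and bijectivity—separately, treating the $C(X)$ case and the $L^p(\mu)$ cases in parallel wherever possible. Linearity of $L_\alpha$ is immediate and requires only a one-line remark: $(cf+g)\circ\varphi_\alpha = c(f\circ\varphi_\alpha)+(g\circ\varphi_\alpha)$ pointwise (or $\mu$-a.e.), so $L_\alpha(cf+g)=cL_\alpha f+L_\alpha g$. Before anything else, though, I should confirm that $L_\alpha$ actually maps $\mathscr{X}$ into itself: for $C(X)$ this is the continuity of $\varphi_\alpha$ (noted in the remark following the definition of $G$-invariance), and for $L^p(\mu)$ this is precisely the $G$-invariance of $L^p(\mu)$ recorded in Remark~\ref{reminv}. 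I would state this at the outset so the map is well-defined.

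Next I would prove the isometry property. For $\mathscr{X}=C(X)$, since $\varphi_\alpha$ is a continuous self-map of the compact space $X$ and $\varphi_{\alpha}$ is a bijection (its inverse is $\varphi_{\alpha^{-1}}$, because $G$ is a group acting on $X$), we have $\varphi_\alpha(X)=X$, hence $\sup_{x\in X}\abs{f(\varphi_\alpha x)} = \sup_{y\in X}\abs{f(y)}$, i.e. $\n{L_\alpha f}_\infty = \n{f}_\infty$. For $\mathscr{X}=L^p(\mu)$ with $1\le p<\infty$, the invariance identity \eqref{measinv}, extended to $L^p$-functions via Remark~\ref{reminv}, gives
\begin{equation*}
\int_X \abs{f\circ\varphi_\alpha}^p\,d\mu = \int_X \abs{f}^p\circ\varphi_\alpha\,d\mu = \int_X \abs{f}^p\,d\mu,
\end{equation*}
so $\n{L_\alpha f}_p = \n{f}_p$. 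For $\mathscr{X}=L^\infty(\mu)$, the statement $\mu(\alpha E)=\mu(E)$ for all Borel $E$ (again Remark~\ref{reminv}) shows that a set is $\mu$-null if and only if its image under $\varphi_\alpha$ is $\mu$-null, from which one checks that the essential supremum of $f\circ\varphi_\alpha$ equals that of $f$; hence $\n{L_\alpha f}_\infty=\n{f}_\infty$.

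Finally, for bijectivity I would exhibit the explicit inverse $L_{\alpha^{-1}}$. Using that $\varphi_\alpha\circ\varphi_{\alpha^{-1}} = \varphi_{\alpha^{-1}}\circ\varphi_\alpha = \varphi_e = \mathrm{id}_X$ (which follows from $G$ acting on $X$, so the map $\alpha\mapsto\varphi_\alpha$ is a homomorphism into the bijections of $X$), we get for any $f$ that $L_\alpha(L_{\alpha^{-1}}f) = (f\circ\varphi_{\alpha^{-1}})\circ\varphi_\alpha = f\circ(\varphi_{\alpha^{-1}}\circ\varphi_\alpha) = f$, and symmetrically $L_{\alpha^{-1}}(L_\alpha f)=f$. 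Since each $L_\beta$ is already known to map $\mathscr{X}$ into $\mathscr{X}$, this shows $L_\alpha$ is a bijection with $L_\alpha^{-1}=L_{\alpha^{-1}}$. (An isometry is automatically injective, so strictly only surjectivity needs the inverse, but displaying $L_{\alpha^{-1}}$ handles both at once.)

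The proof is essentially routine; there is no single hard step, but the point that requires a little care—and the place I would be most careful in writing—is the $L^\infty$ case, where one cannot integrate and must instead argue directly from the measure-preserving property that essential suprema are preserved and that the composition is well-defined on equivalence classes (i.e. that $f=g$ $\mu$-a.e. implies $f\circ\varphi_\alpha = g\circ\varphi_\alpha$ $\mu$-a.e., which again uses $\mu(\alpha E)=\mu(E)$). Everything else is bookkeeping with the group-homomorphism property $\varphi_\alpha\circ\varphi_\beta=\varphi_{\alpha\beta}$ and the invariance of $\mu$.
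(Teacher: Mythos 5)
Your proof is correct and follows essentially the same route as the paper's: bijectivity via the explicit inverse $L_{\alpha^{-1}}$, the invariance \eqref{measinv} (extended through Remark~\ref{reminv}) for the $L^p$ and $L^\infty$ isometry, and a supremum argument using surjectivity of $\varphi_\alpha$ for $C(X)$, which is equivalent to the paper's two-inequality argument. Your added care about well-definedness and the $L^\infty$ null-set point is a fine elaboration of what the paper leaves implicit, but it is not a different approach.
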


\begin{proof}
The bijectivity of each $L_\alpha$ is clear because each has an inverse map $L_{\alpha^{-1}}$. The linearity of each $L_\alpha$ is also clear. Further, the invariance property \eqref{measinv} of $\mu$ yields that each $L_\alpha$ is an isometry on $L^p(\mu)$ (the case for $L^\infty(\mu)$ follows from Remark~\ref{reminv}).

To show the same on $C(X)$, we observe that $$|(L_\alpha f)(x)|=|f(\alpha x)|\leq \n{f}\hspace{.125in}\text{ and }\hspace{.125in}|f(x)|=|(L_\alpha f)(\alpha^{-1}x)|\leq\n{L_\alpha f}$$ for all $x\in X$. These inequalities yield that $\n{L_\alpha f}=\n{f}$.
\end{proof}

\begin{corollary}\label{Lp and C G-inv closure}
Suppose $\mathscr{X}$ is any of the spaces $C(X)$ or $L^p(\mu)$ for $1\leq p\leq\infty$. If $Y\subset\mathscr{X}$ is $G$-invariant, then the closure of $Y$ in $\mathscr{X}$ is $G$-invariant.
\end{corollary}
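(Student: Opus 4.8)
The plan is to leverage Theorem~\ref{Lalpha bli}: for each $\alpha\in G$, the map $L_\alpha:\mathscr{X}\to\mathscr{X}$, $L_\alpha f=f\circ\varphi_\alpha$, is a bijective linear isometry, hence in particular norm-continuous (and continuous for the uniform topology when $\mathscr{X}=C(X)$). I would fix $\alpha\in G$ and invoke the elementary topological fact that a continuous map carries the closure of a set into the closure of its image, so that $L_\alpha\big(\overline{Y}\big)\subseteq\overline{L_\alpha(Y)}$, where the closures are taken in $\mathscr{X}$.

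Next I would use the hypothesis that $Y$ is $G$-invariant: by definition this means $f\circ\varphi_\alpha\in Y$ for every $f\in Y$, i.e., $L_\alpha(Y)\subseteq Y$. Passing to closures gives $\overline{L_\alpha(Y)}\subseteq\overline{Y}$, and combining this with the inclusion from the previous step yields $L_\alpha\big(\overline{Y}\big)\subseteq\overline{Y}$, that is, $f\circ\varphi_\alpha\in\overline{Y}$ for every $f\in\overline{Y}$. Since $\alpha\in G$ was arbitrary, $\overline{Y}$ is $G$-invariant, which is the claim.

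There is essentially no real obstacle here; the only point worth a remark is that the argument is uniform across all the cases $\mathscr{X}=C(X)$ and $\mathscr{X}=L^p(\mu)$ for $1\leq p\leq\infty$, precisely because Theorem~\ref{Lalpha bli} supplies continuity of $L_\alpha$ in each of those topologies. One could equally phrase the proof by noting that $L_\alpha$ is a homeomorphism (its inverse $L_{\alpha^{-1}}$ is also an isometry), so that $L_\alpha\big(\overline{Y}\big)=\overline{L_\alpha(Y)}$ and in fact $L_\alpha(Y)=Y$; but only the one inclusion above is needed for the conclusion.
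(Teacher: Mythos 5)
Your argument is correct and is exactly the route the paper intends: the corollary is stated without proof precisely because, as you observe, Theorem~\ref{Lalpha bli} makes each $L_\alpha$ continuous on $\mathscr{X}$, so $L_\alpha(\overline{Y})\subseteq\overline{L_\alpha(Y)}\subseteq\overline{Y}$ for every $\alpha\in G$. No gaps; your closing remark about $L_\alpha$ being a homeomorphism is also consistent with how the paper handles the analogous weak* case.
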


\begin{theorem}\label{weak* cont}
Let $\alpha\in G$. If $L_\alpha:L^\infty(\mu)\to L^\infty(\mu)$ is the map given by $L_\alpha(f)=f\circ\varphi_\alpha$, then $L_\alpha$ is a weak*-homeomorphism.
\end{theorem}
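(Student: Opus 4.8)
The plan is to realize $L_\alpha$ on $L^\infty(\mu)$ as the Banach-space adjoint of a bounded operator on the predual $L^1(\mu)$, and then invoke the standard fact that a linear self-map of a dual space is weak*-continuous precisely when it arises in this way. Concretely, identify $L^\infty(\mu)$ with $L^1(\mu)^*$ via the pairing $\langle f,g\rangle=\int_X fg\,d\mu$ for $f\in L^\infty(\mu)$ and $g\in L^1(\mu)$. By Theorem~\ref{Lalpha bli}, $L_{\alpha^{-1}}$ is a bounded (indeed isometric) linear operator on $L^1(\mu)$, and Remark~\ref{switcher-mu}, applied with the two factors in the roles interchanged and $\alpha$ replaced by $\alpha^{-1}$, gives $\langle L_\alpha f,g\rangle=\langle f,L_{\alpha^{-1}}g\rangle$ for all such $f,g$. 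Hence $L_\alpha=(L_{\alpha^{-1}})^{*}$, so $L_\alpha$ is weak*-continuous.

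A self-contained alternative, avoiding the adjoint characterization, is a direct net argument. Suppose $f_\lambda\to f$ in the weak*-topology of $L^\infty(\mu)$. Fix $g\in L^1(\mu)$; by Remark~\ref{reminv} we have $g\circ\varphi_{\alpha^{-1}}\in L^1(\mu)$, so $\int_X f_\lambda\,(g\circ\varphi_{\alpha^{-1}})\,d\mu\to\int_X f\,(g\circ\varphi_{\alpha^{-1}})\,d\mu$. Rewriting both sides with Remark~\ref{switcher-mu} turns this into $\int_X (f_\lambda\circ\varphi_\alpha)\,g\,d\mu\to\int_X (f\circ\varphi_\alpha)\,g\,d\mu$, that is, $L_\alpha f_\lambda\to L_\alpha f$ in the weak*-topology. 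Thus $L_\alpha$ is weak*-continuous.

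In either approach, once weak*-continuity of $L_\alpha$ is established, the same reasoning applied to $\alpha^{-1}$ shows $L_{\alpha^{-1}}$ is weak*-continuous; since $L_\alpha$ and $L_{\alpha^{-1}}$ are mutually inverse bijections by Theorem~\ref{Lalpha bli}, it follows that $L_\alpha$ is a weak*-homeomorphism.

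I do not expect a serious obstacle here; the only points needing care are bookkeeping. One must verify that $g\circ\varphi_{\alpha^{-1}}\in L^1(\mu)$, which is exactly Remark~\ref{reminv}, and one must match indices in Remark~\ref{switcher-mu}, whose statement is phrased for $1\le p<\infty$ with $f\in L^p(\mu)$ and $g\in L^{p'}(\mu)$: take $p=1$, interchange which factor plays the role of ``$f$'', and substitute $\alpha^{-1}$ for $\alpha$ to arrive at precisely $\int_X (f\circ\varphi_\alpha)\,g\,d\mu=\int_X f\,(g\circ\varphi_{\alpha^{-1}})\,d\mu$ for $f\in L^\infty(\mu)$ and $g\in L^1(\mu)$.
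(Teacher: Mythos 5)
Your proposal is correct and is essentially the paper's own argument: the paper likewise proves weak*-continuity by showing, via Remark~\ref{switcher-mu}, that $\Lambda_g\circ L_\alpha=\Lambda_{g\circ\varphi_{\alpha^{-1}}}$ for every $g\in L^1(\mu)$ (i.e., that $L_\alpha$ is dual to $L_{\alpha^{-1}}$ on the predual), and then applies the same reasoning to $\alpha^{-1}$ to conclude $L_\alpha$ is a weak*-homeomorphism. Your adjoint and net formulations are just repackagings of this same key identity, so there is no substantive difference.
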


\begin{proof}
Recall the weak*-topology on $L^\infty(\mu)$ is a weak topology induced by the maps on $L^\infty(\mu)$ of the form $$\Lambda_gf=\int_X fg\,d\mu,$$ for some $g\in L^1(\mu)$. Thus, $L_\alpha$ is continuous with respect to the weak*-topology if and only if $\Lambda_g\circ L_\alpha$ is continuous for all maps $\Lambda_g$.

Fix $g\in L^1(\mu)$. We observe that $$(\Lambda_g\circ L_\alpha)(f)=\Lambda_g(f\circ\varphi_\alpha)=\int_X (f\circ\varphi_\alpha)\cdot g\,d\mu=\int_X f\cdot (g\circ\varphi_{\alpha^{-1}})\,d\mu=\Lambda_{g\circ\varphi_{\alpha^{-1}}}f,$$ for every $f\in L^\infty(\mu)$, by Remark~\ref{switcher-mu}. We conclude $L_\alpha$ is continuous on $L^\infty(\mu)$ with respect to the weak*-topology.

Finally, the map $L_{\alpha^{-1}}:L^\infty(\mu)\to L^\infty(\mu)$ given by $L_{\alpha^{-1}}(f)=f\circ\varphi_{\alpha^{-1}}$ is the inverse of $L_\alpha$. By a similar argument, $L_{\alpha^{-1}}$ is continuous with respect to the weak*-topology, and thus $L_\alpha$ is a weak*-homeomorphism.
\end{proof}

\begin{corollary}\label{Linf G-inv closure}
If $Y\subset L^\infty(\mu)$ is $G$-invariant, then $\overline{Y}^{*}$ is $G$-invariant.
\end{corollary}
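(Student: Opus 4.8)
The plan is to deduce Corollary~\ref{Linf G-inv closure} directly from Theorem~\ref{weak* cont}, in the same way that Corollary~\ref{Lp and C G-inv closure} follows from Theorem~\ref{Lalpha bli}. The key observation is that a set $Y$ is $G$-invariant precisely when $L_\alpha(Y)\subset Y$ for every $\alpha\in G$, and since each $L_\alpha$ is a bijection with inverse $L_{\alpha^{-1}}$ (which is also one of the maps in the family), this containment is in fact an equality $L_\alpha(Y)=Y$ for every $\alpha$.

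First I would fix $\alpha\in G$ and an arbitrary $f\in\overline{Y}^{*}$; the goal is to show $f\circ\varphi_\alpha=L_\alpha f\in\overline{Y}^{*}$. Since $f$ lies in the weak*-closure of $Y$ and $L_\alpha$ is weak*-continuous by Theorem~\ref{weak* cont}, we get $L_\alpha f\in L_\alpha\big(\overline{Y}^{*}\big)\subset\overline{L_\alpha(Y)}^{*}$. Now by $G$-invariance of $Y$ we have $L_\alpha(Y)\subset Y$, hence $\overline{L_\alpha(Y)}^{*}\subset\overline{Y}^{*}$, and combining the two inclusions yields $L_\alpha f\in\overline{Y}^{*}$. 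Since $\alpha$ and $f$ were arbitrary, $\overline{Y}^{*}$ is $G$-invariant. (One could equally phrase this via the homeomorphism property: $L_\alpha\big(\overline{Y}^{*}\big)=\overline{L_\alpha(Y)}^{*}=\overline{Y}^{*}$, using $L_\alpha(Y)=Y$.)

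There is no real obstacle here; the entire content has been front-loaded into Theorem~\ref{weak* cont}, and what remains is the soft-topology fact that a continuous map sends the closure of a set into the closure of its image. The only point requiring a moment's care is making sure we use weak*-\emph{continuity} of $L_\alpha$ (to push $f$ through the closure) together with $G$-invariance of $Y$ (to absorb $L_\alpha(Y)$ back into $Y$); the full homeomorphism statement of Theorem~\ref{weak* cont} is slightly more than needed, since continuity of each $L_\alpha$ for every $\alpha\in G$ already suffices. I would keep the proof to two or three sentences.
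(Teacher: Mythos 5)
Your argument is correct and is exactly the route the paper intends: the corollary is stated as an immediate consequence of Theorem~\ref{weak* cont}, using weak*-continuity of each $L_\alpha$ together with $G$-invariance of $Y$ to get $L_\alpha\big(\overline{Y}^{*}\big)\subset\overline{L_\alpha(Y)}^{*}\subset\overline{Y}^{*}$. Your remark that plain continuity of each $L_\alpha$ suffices (the full homeomorphism property being more than needed) is also accurate.
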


\begin{remark}
From Remark~\ref{E Omega G-inv} and Corollary~\ref{Linf G-inv closure}, each $\overline{E}^*_\Omega$ is a weak*-closed $G$-invariant subspace of $L^\infty(\mu)$.
\end{remark}

\section{Characterization of Weak*-Closed \texorpdfstring{$G$}{G}-Invariant Subspaces of \texorpdfstring{$L^\infty(\mu)$}{L(u)}}\label{G inv spaces}


In this section, we state and prove our main result (Theorem~\ref{weak*-closed G inv}), which shows that the spaces $\overline{E}_\Omega^*$ are the \textit{only} weak*-closed $G$-invariant subspaces of $L^\infty(\mu)$.

\begin{theorem}\label{weak*-closed G inv}
If $Y$ is a weak*-closed $G$-invariant subspace of $L^\infty(\mu)$, then $Y=\overline{E}_\Omega^{*}$ for some $\Omega\subset I$.
\end{theorem}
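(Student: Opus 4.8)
The plan is to set
\[
\Omega = \{\, i \in I : \pi_i f \neq 0 \text{ for some } f \in Y \,\}
\]
and to prove the two inclusions $\overline{E}_\Omega^{*} \subseteq Y$ and $Y \subseteq \overline{E}_\Omega^{*}$ separately. Two structural facts drive the argument. First, by the Peter--Weyl theorem --- using that each $H_i$ is a finite-dimensional $G$-minimal (hence irreducible) subspace of $C(X)$ and that property $(*)$ renders the decomposition $L^2(\mu)=\bigoplus_i H_i$ multiplicity-free --- each $\pi_i$ is a convolution operator: writing $u * f := \int_G u(\alpha)\,(f\circ\varphi_\alpha)\,dm(\alpha)$ for $u \in L^1(G)$ and $f \in L^\infty(\mu)$ (a weak*, i.e.\ Gelfand--Pettis, integral, with $\|u*f\|_\infty \leq \|u\|_1\,\|f\|_\infty$), there is $u_i \in C(G)$ --- a scalar multiple of a character of $G$ --- with $\pi_i f = u_i * f$ for every $f \in L^\infty(\mu)$. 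In particular each $\pi_i$ is weak*-continuous on $L^\infty(\mu)$. Second, exactly as in the proof of Theorem~\ref{weak* cont} (via Remark~\ref{switcher-mu}), for each $f \in L^\infty(\mu)$ the orbit map $\alpha \mapsto f\circ\varphi_\alpha$ is continuous from $G$ into $L^\infty(\mu)$ equipped with its weak*-topology.

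For the inclusion $\overline{E}_\Omega^{*} \subseteq Y$, fix $i \in \Omega$ and choose $f \in Y$ with $\pi_i f \neq 0$. Since $Y$ is $G$-invariant, $\alpha \mapsto u_i(\alpha)(f\circ\varphi_\alpha)$ is a weak*-continuous map from the compact group $G$ into the subspace $Y$, so the weak* integral $\pi_i f$ lies in the weak*-closed convex hull of its range, hence in $Y$ (which is weak*-closed). Thus $H_i \cap Y$ is a nonzero $G$-invariant subspace of the $G$-minimal space $H_i$, which forces $H_i \subseteq Y$. As this holds for every $i \in \Omega$ and $Y$ is a weak*-closed subspace, $E_\Omega = \bigoplus_{i \in \Omega} H_i \subseteq Y$, and therefore $\overline{E}_\Omega^{*} \subseteq Y$.

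For the inclusion $Y \subseteq \overline{E}_\Omega^{*}$, let $f \in Y$; then $\pi_k f = 0$ for every $k \notin \Omega$ by the definition of $\Omega$. Fix an approximate identity $(u_\lambda)$ for $L^1(G)$ consisting of trigonometric polynomials (finite linear combinations of matrix coefficients of irreducible representations of $G$) with $\sup_\lambda \|u_\lambda\|_1 < \infty$; such a net exists by the Peter--Weyl theorem. Using $\int_G u_\lambda\,dm = 1$ and the weak*-continuity of the orbit map, one obtains the standard fact that $u_\lambda * f \to f$ in the weak*-topology. On the other hand, since each $\pi_k$ commutes with $G$ and is weak*-continuous, $\pi_k(u_\lambda * f) = u_\lambda * (\pi_k f)$; this vanishes when $k \notin \Omega$ (as $\pi_k f = 0$) and, by the Schur orthogonality relations together with multiplicity-freeness, it also vanishes unless the irreducible representation carried by $H_k$ is among the finitely many whose matrix coefficients appear in $u_\lambda$. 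Hence $\pi_k(u_\lambda * f) = 0$ for all $k$ outside some finite subset of $\Omega$, so Remark~\ref{E Omega L2 pi i} gives $u_\lambda * f \in E_\Omega$. Letting $\lambda$ vary, $f \in \overline{E}_\Omega^{*}$. Combining the two inclusions yields $Y = \overline{E}_\Omega^{*}$.

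The step I expect to demand the most care is the deduction in the second inclusion that $u_\lambda * f \in E_\Omega$ from the vanishing of $\pi_k(u_\lambda * f)$ for large $k$: this requires that convolution by a trigonometric polynomial annihilates all but finitely many of the summands $H_k$, which rests on the decomposition $L^2(\mu) = \bigoplus_i H_i$ being multiplicity-free --- precisely what property $(*)$ secures, since by Frobenius reciprocity $\dim(H_i \cap H(x))$ equals the multiplicity of the representation carried by $H_i$ in $L^2(\mu) \cong L^2(G/G_x)$, where $G_x$ is the stabilizer of $x$. A secondary point requiring justification is that a weak*-closed $G$-invariant subspace is closed under the averaging $f \mapsto \int_G u(\alpha)(f\circ\varphi_\alpha)\,dm(\alpha)$ for $u \in C(G)$; this follows from the weak*-continuity of the orbit map and the basic properties of the Gelfand--Pettis integral.
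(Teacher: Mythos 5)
Your argument is essentially correct, but it takes a genuinely different route from the paper. The paper never works with the projections $\pi_i$ directly at the weak* level: it proves Lemma~\ref{main lemma} (if $g\notin\overline{Y}^{*}$ then $g\notin\overline{Y}^{2}$) by a Hahn--Banach separation plus an averaging of the separating functional over a small neighborhood in $G$, concludes $\overline{Y}^{*}=\overline{Y}^{2}\cap L^\infty(\mu)$ for $G$-invariant $Y$ (Remark~\ref{main remark}), and then simply quotes the $L^2$ classification (Theorem~\ref{main C and Lp result}) to identify $\overline{Y}^{2}$ with $\overline{E}_\Omega^{2}$. You instead prove the two inclusions directly, using that property $(*)$ makes the decomposition multiplicity-free (via Frobenius reciprocity), so that each $\pi_i$ is convolution against a multiple of a character; this gives $\pi_i(Y)\subset Y$ and hence $H_i\subset Y$ for $i\in\Omega$ by $G$-minimality, while a trigonometric-polynomial approximate identity gives $u_\lambda * f\in E_\Omega$ with $u_\lambda * f\to f$ weak*. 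What the paper's route buys is economy: it reuses the $L^2$ result of \cite{hokampconti} and needs only one scalar-valued Hahn--Banach/averaging lemma, with no explicit representation theory beyond the axioms of a $G$-collection. What your route buys is independence from Theorem~\ref{main C and Lp result} and an explicit structural picture (isotypic projections, multiplicity-freeness), at the cost of importing more standard machinery that a full writeup would have to justify: finite-dimensionality of each $H_i$, the Frobenius/Schur arguments, the existence on a general compact group of a trigonometric-polynomial approximate identity converging to $\delta_e$ against continuous functions, and the basic theory of Gelfand--Pettis integrals in the weak* topology. One small misattribution: the weak*-continuity of the orbit map $\alpha\mapsto f\circ\varphi_\alpha$ is not ``exactly as in Theorem~\ref{weak* cont}'' (which concerns a fixed $\alpha$); it is the paper's Lemma~\ref{cont map}, which rests on the $L^1$-continuity of translations from \cite{hokampconti}. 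None of these points is a genuine gap, but they are the places where your proof delegates work to standard facts that the paper's argument deliberately avoids.
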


This result is an analogue to Theorem~5.1 of \cite{hokampconti}, which is used in its proof:

\begin{theorem}[5.1 \cite{hokampconti}]\label{main C and Lp result}
Let $\mathscr{X}$ be any of the spaces $C(X)$ or $L^p(\mu)$ for $1\leq p<\infty$. If $Y$ is a closed $G$-invariant subspace of $\mathscr{X}$, then $Y$ is the closure of $E_\Omega$ for some $\Omega\subset I$.
\end{theorem}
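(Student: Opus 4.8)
The plan is to read the index set off of $Y$ itself and then prove two inclusions. Set
$$\Omega=\{\,i\in I:\pi_i f\neq 0\text{ for some }f\in Y\,\},$$
so that, by definition, $\pi_i f=0$ for every $f\in Y$ and every $i\notin\Omega$. The engine of the whole argument is the family of averaging operators: for $g\in C(G)$ put $T_g f=\int_G g(\alpha)\,(L_\alpha f)\,dm(\alpha)$, understood as an $\mathscr{X}$-valued (Bochner) integral. The single fact that makes this well-defined—and precisely the fact that fails for $L^\infty(\mu)$, which is why that case is treated separately—is that for $f\in\mathscr{X}$ the orbit map $\alpha\mapsto L_\alpha f$ is \emph{norm-continuous} from $G$ into $\mathscr{X}$. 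For $C(X)$ this comes from the joint (hence uniform) continuity of the action on the compact space $X$, and for $L^p(\mu)$, $1\le p<\infty$, from the $C(X)$ case together with the density of $C(X)$ and the isometry property of each $L_\alpha$ from Theorem~\ref{Lalpha bli}. Given this, each $T_g f$ exists, $\|T_g f\|\le\|g\|_{L^1(m)}\|f\|$, and since $T_g f$ is a norm-limit of Riemann sums of the elements $L_\alpha f\in Y$, we get $T_g(Y)\subset Y$ whenever $Y$ is closed and $G$-invariant.

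Next I would tie these averages to the projections $\pi_i$. Let $\chi_i$ and $d_i$ denote the character and dimension of the finite-dimensional irreducible representation $\alpha\mapsto L_\alpha|_{H_i}$ (finite-dimensional since $H_i$ is $G$-minimal). The Peter--Weyl projection formula gives $T_{d_i\overline{\chi_i}}=\pi_i$ on $L^2(\mu)$, and property $(*)$ is exactly what is needed here: writing $G_x$ for the stabilizer of $x$, one has $H_i\cap H(x)=H_i^{G_x}$, so $(*)$ together with Frobenius reciprocity forces the type of $H_i$ to occur in $L^2(\mu)$ with multiplicity one, whence this isotypic projection is precisely $\pi_i$. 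Since $T_{d_i\overline{\chi_i}}$ and $\pi_i$ are both bounded on $\mathscr{X}$ and agree on the dense subspace $C(X)\subset L^2(\mu)$, the identity $T_{d_i\overline{\chi_i}}=\pi_i$ persists on all of $\mathscr{X}$. For the inclusion $\overline{E}_\Omega^{\mathscr{X}}\subset Y$, fix $i\in\Omega$ and choose $f\in Y$ with $\pi_i f\neq 0$; then $\pi_i f=T_{d_i\overline{\chi_i}}f\in Y$, so $H_i\cap Y$ is a nonzero $G$-invariant subspace of the $G$-minimal space $H_i$, forcing $H_i\subset Y$. Summing over $i\in\Omega$ and using that $Y$ is closed yields $\overline{E}_\Omega^{\mathscr{X}}\subset Y$.

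The reverse inclusion $Y\subset\overline{E}_\Omega^{\mathscr{X}}$ is the main obstacle: the condition $\pi_i f=0$ for all $i\notin\Omega$ only places $f$ in the $L^2$-closure of $E_\Omega$ (Remark~\ref{E Omega L2 pi i}), whereas I must approximate $f$ in the possibly finer $\mathscr{X}$-norm. I would resolve this with a trigonometric approximate identity. For any trigonometric polynomial $g$ on $G$, Peter--Weyl shows $T_g$ maps $\mathscr{X}$ into a finite sum $\bigoplus_{j\in F}H_j$; moreover $T_g$ commutes with each $\pi_i$, because each $L_\alpha$ does and $\pi_i$ is bounded (so it may be pulled inside the integral), giving $\pi_i(T_g f)=T_g(\pi_i f)=0$ for $i\notin\Omega$. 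Hence $T_g f\in E_\Omega$ for every such $g$. It therefore suffices to find trigonometric polynomials $g_\lambda$ with $T_{g_\lambda}f\to f$ in $\mathscr{X}$: starting from a genuine continuous approximate identity $(u_\lambda)$ concentrated at $e\in G$ one has $T_{u_\lambda}f\to L_e f=f$ by norm-continuity of the orbit map, and approximating each $u_\lambda$ in $L^1(m)$ by a trigonometric polynomial $g_\lambda$ (matrix coefficients are dense in $C(G)$) changes the value by at most $\|g_\lambda-u_\lambda\|_{L^1(m)}\|f\|$. Thus $T_{g_\lambda}f\to f$ with each $T_{g_\lambda}f\in E_\Omega$, so $f\in\overline{E}_\Omega^{\mathscr{X}}$, finishing the proof.

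The points I expect to demand the most care are the identification $T_{d_i\overline{\chi_i}}=\pi_i$ (and the commutation $T_g\pi_i=\pi_i T_g$ in the trigonometric-polynomial step), which rests on reducing property $(*)$ to multiplicity-freeness, and the norm-continuity of $\alpha\mapsto L_\alpha f$ on $\mathscr{X}$. The latter is the ingredient that is unavailable when $\mathscr{X}=L^\infty(\mu)$, and its presence here is exactly what separates this theorem from the weak*-analogue.
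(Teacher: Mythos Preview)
Note first that this paper does not itself prove Theorem~\ref{main C and Lp result}; it is quoted from \cite{hokampconti} and used as input to Theorem~\ref{weak*-closed G inv}. Your argument is correct---the norm-continuity of $\alpha\mapsto L_\alpha f$, the identification $T_{d_i\overline{\chi_i}}=\pi_i$ via $(*)$ and multiplicity one, and the approximate-identity step all go through as you describe---but it differs from the approach of \cite{hokampconti}, which one can read off from this paper's structure (Lemma~\ref{main lemma} is stated to be the $L^\infty$ analogue of Lemma~5.4 there, and the proof of Theorem~\ref{weak*-closed G inv} visibly follows the same template). That approach reduces everything to $L^2$: settle $\mathscr{X}=L^2(\mu)$ directly from orthogonality (Remark~\ref{E Omega L2 pi i}), then prove via Hahn--Banach separation and a single averaging construction that $\overline Y^{\mathscr{X}}=\overline Y^{2}\cap\mathscr{X}$ for $G$-invariant $Y$, after which the conclusion follows formally exactly as in the displayed proof of Theorem~\ref{weak*-closed G inv}. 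You instead stay inside $\mathscr{X}$ throughout, using convolution by characters to show $H_i\subset Y$ and convolution by an approximate identity to show $f\in\overline{E}_\Omega^{\mathscr{X}}$. Your route makes the Peter--Weyl content explicit and never detours through $L^2$; the cited paper's route is lighter on representation theory and has the advantage that one separation lemma drives $C(X)$, $L^p(\mu)$, and---as the present paper demonstrates---$L^\infty(\mu)$ with the weak*-topology.
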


The set $\Omega$ from Theorem~\ref{main C and Lp result} is the set $\{i\in I:\pi_{i}Y\neq 0\}$. The proof of Theorem~\ref{weak*-closed G inv} further requires Lemma~\ref{main lemma}, which we prove in Section~\ref{proof main lemma}.

\begin{lemma}\label{main lemma}
Let $Y\subset L^\infty(\mu)$ be a $G$-invariant space. Then for $g\in L^\infty(\mu)$, we have that $g\notin\overline{Y}^{2}$ whenever $g\notin\overline{Y}^{*}$.
\end{lemma}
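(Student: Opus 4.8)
The plan is to prove the contrapositive: if $g\in L^\infty(\mu)$ lies in $\overline{Y}^{*}$, then $g\in\overline{Y}^{2}$; but actually the cleaner route is to prove the statement directly by showing that $\overline{Y}^{2}\cap L^\infty(\mu)\subset\overline{Y}^{*}$, which is the reverse of the easy containment in Remark~\ref{weak* and weak Lp closure containment}. So the real content is: a bounded $L^2$-limit of elements of $Y$ is a weak*-limit of elements of $Y$. Without loss of generality I may assume $Y$ is a subspace (replace $Y$ by its linear span, which is still $G$-invariant and has the same two closures), and by Corollary~\ref{Lp and C G-inv closure} and Corollary~\ref{Linf G-inv closure} both $\overline{Y}^{2}$ and $\overline{Y}^{*}$ are $G$-invariant.

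The first key step is to bring in the structure from Section~2: by Theorem~\ref{main C and Lp result} applied to the closed $G$-invariant subspace $\overline{Y}^{2}$ of $L^2(\mu)$, we get $\overline{Y}^{2}=\overline{E}_\Omega^{2}$ where $\Omega=\{i\in I:\pi_i Y\neq 0\}$. Hence, by Remark~\ref{E Omega L2 pi i}, the hypothesis $g\in\overline{Y}^{2}$ is equivalent to $\pi_i g=0$ for all $i\notin\Omega$. The goal then becomes: show that every $g\in L^\infty(\mu)$ with $\pi_i g=0$ for $i\notin\Omega$ lies in $\overline{E}_\Omega^{*}$, since $\overline{E}_\Omega^{*}\subset\overline{Y}^{*}$ (as $E_\Omega\subset\overline{Y}^{2}$ forces... actually one must check $E_\Omega\subset\overline{Y}^{*}$; this follows because each $H_i$ with $i\in\Omega$ is $G$-minimal and $\pi_i Y\neq 0$, so a standard argument using that $\pi_i$ commutes with $G$ and that $Y$ is $G$-invariant shows $H_i\subset\overline{Y}^{*}$ — this is the analogue of the continuous/$L^p$ argument in \cite{hokampconti}). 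So it suffices to prove that if $g\in L^\infty(\mu)$ satisfies $\pi_i g = 0$ for all $i\notin\Omega$, then $g\in\overline{E}_\Omega^*$.

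For this last step — the heart of the matter — I would use a mollification/averaging argument against an approximate identity built from the group. The idea: for a suitable net (or sequence) of probability measures $\nu_j$ on $G$ concentrating at the identity, define $g_j=\int_G (g\circ\varphi_\alpha)\,d\nu_j(\alpha)$ (a Bochner-type average, or defined weakly). One shows (i) each $g_j$ is a bounded function whose projections $\pi_i g_j$ vanish for $i\notin\Omega$ and which has enough regularity to actually lie in $E_\Omega$ or in $\overline{E}_\Omega^2\cap C(X)$ — the point being that convolution with an $L^1$-ish density on $G$ smooths $g$ into a continuous function, which then by Theorem~\ref{main C and Lp result} (the $C(X)$ case) lies in $\overline{E_\Omega}$; and (ii) $g_j\to g$ in the weak*-topology, which follows from $\int_X g_j h\,d\mu\to\int_X g h\,d\mu$ for each $h\in L^1(\mu)$, obtained by Fubini, Remark~\ref{switcher-mu}, and the continuity of $\alpha\mapsto\int_X (g\circ\varphi_\alpha)h\,d\mu$ together with $\nu_j\to\delta_e$. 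Combining (i) and (ii) gives $g\in\overline{\overline{E}_\Omega}^{*}=\overline{E}_\Omega^{*}$.

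The main obstacle I anticipate is step (i): verifying that the group-average $g_j$ of a mere $L^\infty$ function is genuinely continuous (or at least lands in the right $L^p$ space so that Theorem~\ref{main C and Lp result} applies and pins it inside $\overline{E_\Omega}$ rather than something larger), and doing so without assuming $X$ or $G$ is a manifold — only that $G$ is a compact group acting continuously and transitively. Making the convolution argument work in this generality, and checking that averaging does not enlarge the index set $\Omega$ (which should follow from the fact that each $\pi_i$ commutes with $G$, hence commutes with the averaging operator), is where the care is needed; the weak*-convergence in step (ii) is comparatively routine given Remark~\ref{switcher-mu} and the continuity of the action.
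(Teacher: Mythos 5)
Your reduction of the lemma to the containment $\overline{Y}^{2}\cap L^\infty(\mu)\subset\overline{Y}^{*}$ is legitimate, but the route you then take is not the paper's, and as written it has two genuine gaps. The paper never invokes Theorem~\ref{main C and Lp result} or the spaces $\overline{E}_\Omega^{*}$ inside this proof; it argues by direct separation: from $g\notin\overline{Y}^{*}$ it takes $h\in L^1(\mu)$ with $\int_X fh\,d\mu=0$ on $Y$ and $\int_X gh\,d\mu=1$, uses the weak*-continuity of $\alpha\mapsto g\circ\varphi_\alpha$ (Lemma~\ref{cont map}) to find a neighborhood $N$ of the identity on which $\mathrm{Re}\int_X(g\circ\varphi_\alpha)h\,d\mu>\tfrac12$, and then mollifies the \emph{functional} rather than the function, setting $\Lambda F=\int_X h(x)\int_G\psi(\alpha)F(\alpha x)\,dm(\alpha)\,d\mu(x)$ with $\psi$ a bump supported in $N$. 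Cauchy--Schwarz on $G$ gives $|\Lambda F|\le\|\psi\|_2\|h\|_1\|F\|_2$, Fubini plus $G$-invariance of $Y$ gives $\Lambda|_Y=0$, while $\mathrm{Re}\,\Lambda g>\tfrac12$; extending to $L^2(\mu)$ by Hahn--Banach yields $g\notin\overline{Y}^{2}$. This sidesteps exactly the points your outline leaves open.

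The first gap is your step (i): you need the average $g_j(x)=\int_G\psi_j(\alpha)g(\alpha x)\,dm(\alpha)$ of a mere $L^\infty$ function to be continuous so that the $C(X)$ case of Theorem~\ref{main C and Lp result} can place it in the uniform closure of $E_\Omega$. That requires showing $g_j$ is well defined pointwise independently of the chosen representative (the pushforward of $m$ under $\alpha\mapsto\alpha x$ is $\mu$), and a translation argument $g_j(\beta x)=\int_G\psi_j(\alpha\beta^{-1})g(\alpha x)\,dm(\alpha)$ combined with openness of the orbit map $G\to X$ so that points near $x$ are of the form $\beta x$ with $\beta$ near $e$; none of this is supplied, and you yourself flag it as the obstacle. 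The second gap is the inclusion $E_\Omega\subset\overline{Y}^{*}$, which is not the ``standard argument'' you suggest: averaging translates of $f\in Y$ against characters only produces elements of the isotypic component containing $H_i$, not of the individual $G$-minimal space $H_i$ (distinct $H_i$ may carry equivalent representations, which is why $\pi_i$ is not a group average in general), and $\pi_i f\in\overline{Y}^{2}$ is of no use here without the very lemma being proved. Closing this would essentially force you to rerun the machinery behind Theorem~5.1 of \cite{hokampconti} in the weak* setting (for instance by applying your mollification to elements of $Y$ and analyzing $\overline{Y}^{*}\cap C(X)$), so the two steps you defer carry most of the weight of the lemma; the plan is plausibly completable, but it is markedly longer than the paper's separation argument and is not a proof as it stands.
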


\begin{remark}\label{main remark}
From Remark~\ref{weak* and weak Lp closure containment} and Lemma~\ref{main lemma}, for any $G$-invariant space $Y\subset L^\infty(\mu)$, $$\overline{Y}^{*}=\overline{Y}^{2}\cap L^\infty(\mu).$$
\end{remark}

\begin{remark}\label{main remark E}
Remark~\ref{main remark} and Remark~\ref{E Omega L2 pi i} give a description of the sets $\overline{E}_\Omega^{*}$: $$\overline{E}_\Omega^{*}=\overline{E}_\Omega^{2}\cap L^\infty(\mu)=\{f\in L^\infty(\mu):\pi_{i}f=0\text{ when }i\notin\Omega\}.$$
\end{remark}

\begin{proof}[\bf Proof of Theorem~\ref{weak*-closed G inv}.]
Let $Y\subset L^\infty(\mu)$ be a weak*-closed $G$-invariant space. Then $$Y=\overline{Y}^{*}=\overline{Y}^{2}\cap L^\infty(\mu)$$ from Remark~\ref{main remark}. Since $Y$ is $G$-invariant, so is $\overline{Y}^{2}$ from Corollary~\ref{Lp and C G-inv closure}. By Theorem~\ref{main C and Lp result}, $$\overline{Y}^{2}=\overline{E}_{\Omega'}^{2},$$ where $\Omega'=\{i\in I:\pi_{i}\overline{Y}^{2}\neq 0\}$.

We define $\Omega=\{i\in I:\pi_{i}Y\neq 0\}$. Then, Remark~\ref{main remark E} yields $$\overline{E}_\Omega^{2}\cap L^\infty(\mu)=\overline{E}_\Omega^{*}.$$ We have $\Omega=\Omega'$ by the continuity of each $\pi_{i}$, and thus $\overline{E}_\Omega^{2}=\overline{E}_{\Omega'}^{2}$.
\end{proof}


\section{Proof of Lemma~\ref{main lemma}.}\label{proof main lemma}

In this section, we prove Lemma~\ref{main lemma}, which we note is an analogue to Lemma~5.4 of \cite{hokampconti}, as well as a generalization of Lemma~4.2 from \cite{hokamp2021certain}.

\begin{lemma}\label{cont map}
Let $g\in L^\infty(\mu)$. Then the map $\phi:G\to L^\infty(\mu)$ given by $\phi(\alpha)=g\circ\varphi_\alpha$ is weak*-continuous.
\end{lemma}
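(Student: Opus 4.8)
The goal is to show that $\alpha \mapsto g\circ\varphi_\alpha$ is continuous from $G$ into $L^\infty(\mu)$ equipped with the weak*-topology. Since the weak*-topology on $L^\infty(\mu)$ is the initial topology induced by the functionals $\Lambda_h f = \int_X f h\,d\mu$ for $h \in L^1(\mu)$, it suffices to fix $h \in L^1(\mu)$ and show that the scalar function $\alpha \mapsto \int_X (g\circ\varphi_\alpha)\, h\,d\mu$ is continuous on $G$. By Remark~\ref{switcher-mu} (with $p=1$), this integral equals $\int_X g\cdot(h\circ\varphi_{\alpha^{-1}})\,d\mu$. So the plan is to reduce the claim to the continuity of $\alpha \mapsto \int_X g\cdot(h\circ\varphi_{\alpha^{-1}})\,d\mu$, and since $\alpha\mapsto\alpha^{-1}$ is a homeomorphism of $G$, this is equivalent to the continuity of $\beta \mapsto \int_X g\cdot(h\circ\varphi_{\beta})\,d\mu$.

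\textbf{Main step.} The heart of the matter is therefore: for fixed $g \in L^\infty(\mu)$ and $h \in L^1(\mu)$, the map $\beta \mapsto \int_X g\cdot(h\circ\varphi_\beta)\,d\mu$ is continuous on $G$. First I would establish this when $h$ is replaced by a function $f \in C(X)$: here the key is the \emph{joint} continuity of the action $G\times X \to X$, which forces the map $\beta \mapsto f\circ\varphi_\beta$ to be continuous from $G$ into $(C(X),\|\cdot\|_\infty)$ — indeed, by compactness of $X$ and $G$ and a standard uniform-continuity argument, $\|f\circ\varphi_\beta - f\circ\varphi_{\beta_0}\|_\infty \to 0$ as $\beta\to\beta_0$. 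Then $\left|\int_X g(h)(f\circ\varphi_\beta - f\circ\varphi_{\beta_0})\,d\mu\right| \le \|g\|_\infty\, \mu(X)\, \|f\circ\varphi_\beta - f\circ\varphi_{\beta_0}\|_\infty$ handles the $C(X)$ case (in fact one does not even need $h$ here; one uses $g\,d\mu$ directly as the integrating measure, or simply notes $g\cdot h\in L^1(\mu)$ is handled by density as below).

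\textbf{Passing from $C(X)$ to $L^1(\mu)$.} To handle a general $h \in L^1(\mu)$, I would use a standard $3\varepsilon$ argument together with the density of $C(X)$ in $L^1(\mu)$ and the isometry property of $L_\beta$ on $L^1(\mu)$ from Theorem~\ref{Lalpha bli}. Given $\varepsilon>0$, choose $f\in C(X)$ with $\|h - f\|_1 < \varepsilon$. Then for all $\beta$,
\[
\left| \int_X g(h\circ\varphi_\beta)\,d\mu - \int_X g(h\circ\varphi_{\beta_0})\,d\mu \right| \le \|g\|_\infty\|h\circ\varphi_\beta - f\circ\varphi_\beta\|_1 + \left|\int_X g(f\circ\varphi_\beta - f\circ\varphi_{\beta_0})\,d\mu\right| + \|g\|_\infty\|f\circ\varphi_{\beta_0} - h\circ\varphi_{\beta_0}\|_1.
\]
The first and third terms are each $\le \|g\|_\infty\varepsilon$ by the $L^1$-isometry of $L_\beta$ and $L_{\beta_0}$, and the middle term tends to $0$ as $\beta\to\beta_0$ by the $C(X)$ case. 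Hence the limsup of the left side is $\le 2\|g\|_\infty\varepsilon$ for every $\varepsilon$, so it is $0$.

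\textbf{Anticipated obstacle.} The only genuinely substantive point is the continuity of $\beta\mapsto f\circ\varphi_\beta$ in the sup-norm for $f\in C(X)$, which rests on the joint continuity of the group action and compactness; everything else is bookkeeping. I would state that sup-norm continuity as a small intermediate claim (or cite it as standard) and then assemble the pieces as above.
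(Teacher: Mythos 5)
Your proof is correct and follows essentially the same route as the paper: reduce weak*-continuity to the continuity of $\alpha\mapsto\int_X(g\circ\varphi_\alpha)\cdot h\,d\mu$ for each $h\in L^1(\mu)$ and shift the composition onto $h$ via Remark~\ref{switcher-mu}. The only difference is that where the paper simply cites Lemma~6.2 of \cite{hokampconti} for the continuity of $\alpha\mapsto h\circ\varphi_\alpha$ into $L^1(\mu)$, you re-derive that fact inline (sup-norm continuity for $f\in C(X)$ plus a $3\varepsilon$ density argument using the $L^1$-isometry), which is precisely the content of that cited lemma.
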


\begin{proof}
To show that $\phi$ is weak*-continuous, we verify each $\Lambda_h\circ\phi$ is continuous, where $\Lambda_h$ is the map $L^\infty(\mu)\to\C$ given by integration against the function $h\in L^1(\mu)$.

Observe the map $\Lambda_h\circ\phi$ is given by $$(\Lambda_h\circ\phi)(\alpha)=\int_X(g\circ\varphi_\alpha)\cdot h\,d\mu.$$ From Lemma~6.2 of \cite{hokampconti}, the map $\alpha\mapsto h\circ\varphi_\alpha$ is continuous from $G$ into $L^1(\mu)$. Thus, the map $\alpha\mapsto g\cdot(h\circ\varphi_{\alpha^{-1}})$ is continuous. We apply Remark~\ref{switcher-mu} to get that $$\alpha\mapsto\int_Xg\cdot(h\circ\varphi_{\alpha^{-1}})\,d\mu=\int_X(g\circ\varphi_{\alpha})\cdot h\,d\mu$$ is continuous, as desired.
\end{proof}

\begin{proof}[\bf Proof of Lemma~\ref{main lemma}.]
Suppose $g\in L^\infty(\mu)$ and $g\notin\overline{Y}^{*}$. Then there exists a weak*-continuous linear functional $\Gamma$ on $L^\infty(\mu)$ such that $\Gamma f=0$ for $f\in Y$, and $\Gamma g=1$, due to the Hahn-Banach theorem (Theorem~3.5 \cite{RFA}). Since each weak*-continuous linear functional on $L^\infty(\mu)$ is induced by an element of $L^1(\mu)$, there exists $h\in L^1(\mu)$ such that $\Gamma F=\int_X Fh\,d\mu$ for $F\in L^\infty(\mu)$.

From Lemma~\ref{cont map}, there exists a neighborhood $N$ of the identity in $G$ such that $$\text{Re}\int_X (g\circ\varphi_\alpha)\cdot h\,d\mu>\frac{1}{2}$$ for $\alpha\in N$. We choose a continuous map $\psi:G\to [0,\infty)$ such that $\int\psi\,dm=1$ and the support of $\psi$ is contained in $N$ (recall $m$ denotes the Haar measure on $G$).

We now define a map $\Lambda$ on $L^\infty(\mu)$ by $$\Lambda F=\int_X h(x)\int_G \psi(\alpha)\cdot F(\alpha x)\,dm(\alpha)\,d\mu(x),\text{ for }F\in L^\infty(\mu).$$

We fix $F\in L^\infty(\mu)$ and $x\in X$ and define the map $\mathscr{F}_x:G\to\C$ by $\alpha\mapsto F(\alpha x)$. Since $$\int_G|\mathscr{F}_x|^2\,dm=\int_G |F(\alpha x)|^2\,dm(\alpha)=\int_X |F|^2\,d\mu=\n{F}_2^2<\infty,$$ we get $\mathscr{F}_x\in L^2(G)$, and since $\psi$ is continuous, we have that $\psi\in L^2(G)$. Further, 
$$\Big|\int_G \psi \mathscr{F}_x\,dm\Big|\leq\Big(\int_G|\psi|^2\,dm\Big)^\frac{1}{2}\Big(\int_G|\mathscr{F}_x|^2\,dm\Big)^\frac{1}{2}=\n{\psi}_2\cdot\n{F}_2,$$ so that for $F\in L^\infty(\mu)$, $$|\Lambda F|\leq\n{\psi}_2\cdot\n{F}_2\int_X|h|\,d\mu=\n{\psi}_2\cdot\n{F}_2\cdot\n{h}_1.$$

The linearity of $\Lambda$ on $L^\infty(\mu)$ is clear. Thus, $\Lambda$ defines an $L^2$-continuous linear functional on $L^\infty(\mu)$, and hence extends to an $L^2$-continuous linear functional $\Lambda_1$ on $L^2(\mu)$ by the Hahn-Banach theorem (Theorem~3.6 \cite{RFA}). By interchanging the integrals in the definition of $\Lambda$, we see that $\Lambda_1$ annihilates $Y$, since $Y$ is $G$-invariant. Further, $$\text{Re }\Lambda_1 g=\int_G\psi(\alpha)\Big(\text{Re}\int_X g(\alpha x)\cdot h(x)\,d\mu(x)\Big)\,dm(\alpha)>\int_N\psi(\alpha)\cdot\frac{1}{2}\,dm(\alpha)=\frac{1}{2}.$$ We conclude that $g\notin\overline{Y}^{2}$.
\end{proof}

\section{Future Questions}

\begin{itemize}
    \item[(1)] Does a $G$-collection exist for all groups $G$ acting continuously and transitively on $X$? What conditions might exist on $G$ or $X$ that yield a collection lacking ($*$)?
    \item[(2)] Under what conditions can the restrictions on $X$, $G$, and the action of $G$ on $X$ be loosened? Can the compactness of $X$ and $G$ be substituted with local compactness? Can the continuity of the action be substituted with separate continuity?
    \item[(3)] Suppose $H$ is a subgroup of $G$ and $\mathscr{H}$ is a collection of closed $H$-minimal spaces satisfying the same conditions as $\mathscr{G}$. What is the relationship between $\mathscr{H}$ and $\mathscr{G}$? What if $\mathscr{H}$ and $\mathscr{G}$ lack $(*)$? The uniqueness of $\mu$ shows that $H$ does not induce a new $H$-invariant measure on $X$. Further, $G$-invariance implies $H$-invariance (of a space).
    
    We note that (3) is prompted from the study of $\M$-invariant and $\U$-invariant spaces of continuous functions on the unit sphere of $\mathbb{C}^n$ from \cite{NR}, in which it is shown that there are infinitely many $\U$-invariant spaces and only six $\M$-invariant spaces. These six $\M$-invariant spaces are found by combining the $\U$-minimal spaces in a specific way (see Lemma~13.1.2 of \cite{RFT}), and we are curious if this method can be generalized.
    \item[(4)] Under what conditions can a $G$-collection characterize the closed $G$-invariant \textit{algebras} of continuous functions? We note that the case for the unitary group acting on the unit sphere of $\mathbb{C}^n$ is discussed in \cite{RUA} and is also summarized in \cite{RFT}.
\end{itemize}

\section{Data Availability Statement}

Data sharing not applicable to this article as no datasets were generated or analysed during the current study.

\bibliographystyle{unsrt}
\bibliography{bibliography}

\begin{thebibliography}{10}

\bibitem{hokamp2021certain}
Samuel~A. Hokamp.
\newblock {Certain invariant spaces of bounded measurable functions on a
  sphere}.
\newblock {\em Positivity}, 25(5):2081--2098, Nov 2021.

\bibitem{NR}
Alexander Nagel and Walter Rudin.
\newblock Moebius-invariant function spaces on balls and spheres.
\newblock {\em Duke Math. J.}, 43(4):841--865, 1976.

\bibitem{hokampconti}
Samuel~A. Hokamp.
\newblock Spaces of continuous and measurable functions invariant under a group
  action.
\newblock {\em https://arxiv.org/abs/2110.12060}, October 2021.

\bibitem{weil1940}
Andr\'e. Weil.
\newblock {L'int\'egration dans les groupes topologiques et ses applications}.
\newblock {\em Actual. Sci. Ind.}, no. 869, 1940.

\bibitem{joys}
Joe Diestel and Angela Spalsbury.
\newblock {\em {The Joys of Haar Measure}}.
\newblock American Mathematical Society, April 2014.

\bibitem{Izzo2010}
Alexander~J. Izzo.
\newblock {Uniform Algebras Invariant under Transitive Group Actions}.
\newblock {\em Indiana Univ. Math. J.}, 59(2):417--426, 2010.

\bibitem{RFT}
Walter Rudin.
\newblock {\em Function theory in the unit ball of {$\Bbb C^n$}}.
\newblock Classics in Mathematics. Springer-Verlag, Berlin, 2008.
\newblock Reprint of the 1980 edition.

\bibitem{peterweyl}
F.~Peter and H.~Weyl.
\newblock {Die Vollst{\ifmmode\ddot{a}\else\"{a}\fi}ndigkeit der primitiven
  Darstellungen einer geschlossenen kontinuierlichen Gruppe}.
\newblock {\em Math. Ann.}, 97(1):737--755, December 1927.

\bibitem{RFA}
Walter Rudin.
\newblock {\em Functional Analysis}.
\newblock McGraw-Hill, 1973.

\bibitem{RUA}
Walter Rudin.
\newblock Unitarily invariant algebras of continuous functions on spheres.
\newblock {\em Houston J. Math.}, 5(2):253--265, 1979.

\end{thebibliography}

\end{document}